\numberwithin{equation}{section}
\newcommand{\ie}{\emph{i.e.\ }}
\newcommand{\eg}{\emph{e.g.\ }}
\newtheorem{theo}{Theorem}[section]
\newtheorem{lma}[theo]{Lemma}
\newtheorem{cor}[theo]{Corollary}
\newtheorem{defn}[theo]{Definition}
\newtheorem{cond}[theo]{Condition}
\DeclareMathOperator{\R}{\mathbb{R}}
\DeclareMathOperator{\N}{\mathbb{N}}
\DeclareMathOperator{\E}{\mathbb{E}}
\DeclareMathOperator{\Prob}{\mathbb{P}}
\DeclareMathOperator{\sN}{\mathcal{N}}
\newcommand{\vertiii}[1]{{\left\vert\kern-0.25ex\left\vert\kern-0.25ex\left\vert #1 
    \right\vert\kern-0.25ex\right\vert\kern-0.25ex\right\vert}}
\renewcommand{\epsilon}{\varepsilon}
\author{Sascha Troscheit\thanks{The author was supported NSERC Grants 2016\,03719, RGPIN-2014-03154 and the Department of Pure Mathematics at the University of Waterloo.}}
\date{
	\emph{\small Department of Pure Mathematics, University of Waterloo, Waterloo, ON, N2L 3G1, Canada}\\[2ex]
	\today}
\title{The quasi-Assouad dimension for stochastically self-similar sets}
\begin{document}

\maketitle

\begin{abstract}
	The class of stochastically self-similar sets contains many famous examples of random sets, \eg Mandelbrot percolation and general fractal percolation. Under the assumption of the uniform open set condition and some mild assumptions on the iterated function systems used, we show that the quasi-Assouad dimension of self-similar random recursive sets is almost surely equal to the almost sure Hausdorff dimension of the set. 
	We further comment on random homogeneous and $V$-variable sets and the removal of overlap conditions.
\end{abstract}

\vspace{1cm}

\section{Introduction}
The Assouad dimension was first introduced by Patrice Assouad in the 1970's to solve embedding problems, see \cite{assouadphd} and \cite{Assouad79}. It gives a highly localised notion of `thickness' of a metric space and its study in the context of fractals has attracted a lot of attention in recent years. We point the reader to  \cite{Angelevska16,Fraser15a,Fraser14a,Garcia17,Garcia16,Kaenmaki16,Luukkainen98,Lu16,Mackay11,Olsen11,Olson16} for many recent advances on the Assouad dimension.

Let $F$ be a totally bounded metric space and let $0<r\leq R\leq\lvert F\rvert$, where $\lvert.\rvert$ denotes the diameter of a set. Let $N_r(X)$ be the minimal number of sets of diameter $r$ necessary to cover $X$. 
We write $N_{r,R}(F)=\max_{x\in F}N_r(B(x,R)\cap F)$ for the minimal number of centred open $r$ balls needed to cover any open ball of $F$ of diameter less than $R$.
Let 
\[
h(\delta,F)=\inf\{\alpha \mid \exists C>0,\;\forall0<r\leq R^{1+\delta}\leq\lvert F\rvert\text{ we have }N_{r,R}(F)\leq C(R/r)^\alpha\}.
\]
The Assouad dimension is given by $\dim_A(F)=h(0,F)$; it is the minimal exponent such that all open balls of $F$ can be covered by a certain number of $r$ balls relative to the size of the ball of $F$.
We note that $\delta=0$ gives no restriction on the ratio $R/r$ other than that it is greater than $1$.
For positive $\delta>0$ this means, however, that there must be a gap between $r$ and $R$ that grows as $R$ decreases. 
We immediately conclude that $h(\delta,F)\leq h(\delta',F)$ for $0\geq\delta'<\delta$. 
It was shown in \cite{Garcia17} that $h$ may not be continuous in $\delta$ at $0$ and the quasi-Assouad dimension is defined by the limit $\dim_{qA}F=\lim_{\delta\to 0}h(\delta,F)$. 
This quantity was first introduced by L\"u and Xi in~\cite{Lu16} and differs from the Assouad dimension by ignoring values of $r$ close to $R$.
It was also shown in \cite{Garcia17} that in some self-similar deterministic settings this gap is sufficient for the quasi-Assouad dimension to coincide with the upper box-counting dimension irrespective of overlaps. 
Recently, Fraser and Han~\cite{Fraser16b,Fraser16a} introduced the notion of the Assouad spectrum, which is defined similarly. Here, the relative size of $r$ and $R$ is fixed by a quantity $\theta\in(0,1)$ and we restrict covers to $r=R^{1/\theta}$.
We define
\[
\dim_A^\theta(F)=\inf\{\alpha \mid \exists C>0,\;\forall0<r^\theta = R\leq\lvert F\rvert\text{ we have }N_{r,R}(F)\leq (R/r)^\alpha\}.
\]

For totally bounded metric spaces $F$ the following inequalities are immediate,
\begin{equation}
\dim_H F\leq \overline\dim_B F \leq \dim_A^\theta F\leq \dim_{qA}F\leq\dim_A F\quad(\text{for all }\theta\in(0,1)).\label{eq:basicIneq}
\end{equation}


Dimension theoretic results usually assume some separation condition and here we will use the uniform open set condition. If one assumes that all iterated function systems are self-similar and they satisfy the uniform open set condition, one can easily determine the almost sure Hausdorff dimension of the associated stochastically self-similar attractors.
Let $\mathbb{L}=\{\mathbb{I}_{\lambda}\}_{\lambda\in\Lambda}$ be a collection of iterated function systems indexed by $\lambda\in\Lambda$.
Let $c_\lambda^i$ be the contraction rate of the map $f_\lambda^i$. Then the almost sure Hausdorff dimension of random recursive sets is unique $s$ satisfying
\[
\E\left( \sum_{j=1}^{\#\mathbb{I}_{\lambda}}(c_{\lambda}^{j})^{s}\right)=1,
\]
with $\lambda$ chosen according to some `nice' probability measure on $\Lambda$, see \eg \cite{Graf87, Falconer86}. 
Further, Troscheit~\cite{Troscheit15} proved that the Hausdorff, box-counting, and packing dimensions all coincide for these sets, while the Assouad dimension is `maximal' in some sense. 
For example, the limit set of Mandelbrot percolation of the $d$-dimensional unit cube for supercritical probabilities has Assouad dimension $d$, conditioned on non-extinction, see also Berlinkov and J\"arvenp\"a\"a~\cite{Berlinkov16}; and Fraser, Miao, and Troscheit~\cite{Fraser14c} for earlier results. 
The notion of Assouad spectrum, $\dim_A^\theta$ for $\theta\in(0,1)$, was applied to Mandelbrot percolation by Fraser and Yu \cite{Fraser16b,Fraser16a} to obtain more information about its scaling. 
Surprisingly, they found that the Assouad spectrum is constant and equal to the box-counting dimension for all $\theta\in(0,1)$ but their result relies on technical and difficult estimates. 

In this paper we show that this unexpected result is -- in fact -- a general feature of all random recursive sets that satisfy the uniform open set condition and that the quasi-Assouad dimension is almost surely equal to the box-counting (and hence Hausdorff) dimension of random recursive attractors. We use fundamental results about Galton--Watson processes to provide a simple proof of this very general result. 
To achieve this we first obtain an approximation that allows us to talk about a single contraction ratio rather than possibly uncountably many.
We then estimate and bound the number of overlapping intervals which allows us to focus solely on an associated Galton--Watson process and we show that there cannot be `too many' descendants. This in turn gives us bounds for the quasi-Assouad dimension sufficient to prove almost sure equality with the box-counting dimension. 
The inequalities (\ref{eq:basicIneq}) then imply the constant spectrum as a corollary.

\section{Random recursive sets and main results}

Let $\Lambda$ be a compact subset of $\R^k$ for some $k$ and let $\mu$ be a compactly supported Borel probability measure supported on $\Lambda$. 
We use $\Lambda$ to index our choices of iterated function systems, which we assume to have cardinality up to some $\sN\in\N$. 
We denote the \emph{iterated function system (IFS)} associated with $\lambda$ by $\mathbb{I}_\lambda=\{f_\lambda^1,f_\lambda^2, \dots , f_\lambda^{\sN(\lambda)}\}$, where $\sN(\lambda)\leq \sN$ and $f_i^j$ are similarities.
If $0\in\Lambda$, then we assume without loss of generality that $0$ is an isolated point (in $\Lambda$) and set $\mathbb{I}_0=\varnothing$. We further assume that $0$ is the only point in $\Lambda$ where the associated IFS is empty. 
Let $\mathbb{L}=\{\mathbb{I}_\lambda\}_{\lambda\in\Lambda}$. The \emph{random iterated function system (RIFS)} is the tuple $(\mathbb{L},\mu)$.

We use the construction of random code-trees, introduced by J\"arvenp\"a\"a et al.~\cite{Jarvenpaa14a,Jarvenpaa16,Jarvenpaa17} and in Section~\ref{sect:examples} we list several examples and implementations of code-trees with the intent to make the abstract concept of random code-trees more accessible.
Consider the rooted $\sN$-ary tree $\mathfrak{T}$. We label each node with a single $\lambda\in\Lambda$, chosen independently, according to probability measure $\mu$. 
We denote the space of all possible labellings of the tree by $\mathcal{T}$ and refer to individual realisations by $\tau\in\mathcal{T}$.\label{defn:randomRecRealisation}
In this full tree we address vertices by which branch was taken; if $v$ is a node at level $k$ we write $v=(v_{1},v_{2},\dots,v_{k})$, with $v_{i}\in\{1,\dots,{\sN}\}$ and root node $v=(.)$.
The set of all tree levels $\mathfrak{T}$ is then:
\[
\mathfrak{T}=\{\{(.)\}, \{(1),(2),\dots,(\sN)\}, \{ (1,1),(1,2),\dots,(1,\sN),(2,1),\dots,(\sN,\sN)\},\dots\;\}.
\]
Alternatively, we can consider $\tau:\mathfrak{T}\to\Lambda$ as a function assigning labels to nodes, \ie $\tau(v)\in\Lambda$ is the label of the node $v$.
Given a node $v$ we define $\sigma^{v}\tau$ to be the full subtree starting at vertex $v$, with $\sigma^{(.)}\tau=\tau$. 
There exists a natural measure $\Prob$ on the collection of trees induced by $\mu$. Informally, this measure is obtained by assigning a label $\lambda\in\Lambda$ to each node of $\mathfrak{T}$ independently according to $\mu$. 
Formally, let $V\subset\mathfrak{T}$ be a finite collection of vertices. To each $v\in V$ we associate an open set $O(v)\subseteq\Lambda$ and define $\widetilde{V}=\{\tau\in\mathcal{T} \mid \tau(v)\in O(v)\text{ for all }v\in V\}$.
We define $\Prob$ for each of these possible pairings by
\[
\Prob(\widetilde{V})=\prod_{v\in V} \mu (O(v)).
\]
We note that the collection of all $\widetilde{{V}}$ is a basis for the topology of $\mathcal{T}$ and, applying Caratheodory's extension principle, $\Prob$ extends to a unique Borel measure on $\mathcal{T}$.

We write $e_{\lambda}^{j}$ for the letter representing the map $f_{\lambda}^{j}\in\mathbb{I}_{\lambda}$. By assumption we take $\mathbb{I}_0=\varnothing$ and use the letter $\emptyset$ to represent the \emph{empty map}. For each full tree $\tau$ that is labelled by entries in $\Lambda$, we associate another rooted labelled $N$-ary tree $\mathbf{T}_{\tau}$ to the realisation $\tau$, where each node is labelled by a `coding' describing a composition of maps. Given two codings $e_1$ and $e_2$, we write $e_1 e_2 = e_1 \odot e_2$ for concatenation. We let $\epsilon_0$ be the \emph{empty word} and use the letter $\emptyset$ as a multiplicative zero, \ie $\emptyset\odot e = e\odot\emptyset=\emptyset$, to represent the extinction upon using the empty IFS. Similarly, if $\{e_i\}$ is a collection of codings, then $\{e_i\}\cup\emptyset=\{e_i\}$.

\begin{defn}\label{infinityDef1}
	Let $\mathbf{T}_{\tau}$ be a labelled tree.
	We write $\mathbf{T}_{\tau}(v)$ for the label of node $v$ of the tree $\mathbf{T}_{\tau}$.
	The \emph{coding tree} $\mathbf{T}_{\tau}$ is then defined inductively:
	\[
	\mathbf{T}_{\tau}((.))=\epsilon_{0}\text{ and }
	\mathbf{T}_{\tau}(v)=\mathbf{T}_{\tau}((v_{1},\dots,v_{k}))=\mathbf{T}_{\tau}((v_{1},\dots,v_{k-1}))\odot e_{\tau(v_{k-1})}^{v_{k}}
	\]
	for $1\leq v_{k}\leq \mathscr{N}_{\tau(v_{k-1})}$ and $e_{\tau(v_{k-1})}^{v_{k}}=\emptyset$ otherwise. This `deletes' this subbranch as $\emptyset$ annihilates under multiplication.
		We refer to the the set of all codings at the $k$-th level by 
	\[
	\mathbf{T}_{\tau}^{k}=\bigcup_{1\leq v_{1},\dots,v_{k}\leq \sN}\mathbf{T}_{\tau}((v_{1},\dots,v_{k})).
	\]
\end{defn}

To transition from these coding spaces to sets we stipulate some conditions on the maps $f_\lambda^j$.
\begin{cond}\label{cond:mappingconditions}
	Let $(\mathbb{L},\mu)$ be a RIFS. We assume that all maps $f_\lambda^i:\R^d\to\R^d$ are similarities, \ie 
	$\lVert f_\lambda^i(x)-f_\lambda^i(y)\rVert=c_\lambda^i \lVert x-y\rVert$, where $\lVert.\rVert$ denotes the Euclidean metric. Further, they are strict contractions with contraction rates uniformly bounded away from $0$ and $1$, \ie there exist $0< c_{\min} \leq c_{\max}<1$ such that $c_{\min} \leq c_\lambda^i \leq c_{\max}$ for all $\lambda\in \Lambda\setminus\{0\}$ and $1\leq i \leq \sN(\lambda)$.
\end{cond}

We can now define the random recursive set.
\begin{defn}\label{infinityDef2}
	Let $({\mathbb{L}},\mu)$ be a RIFS and $\tau\in\mathcal{T}$. The \emph{random recursive set} $F_{\tau}$ is the compact set satisfying
	\[
	F_{\tau}=\bigcap_{k=1}^{\infty}\bigcup_{e\in\mathbf{T}_{\tau}^{k}} f_{e_1}\circ f_{e_2} \circ \dots \circ f_{e_k}(\Delta),
	\]
	where $\Delta$ is a sufficiently large compact set, satisfying $f_{e_\lambda^i}(\Delta)\subseteq\Delta$ for all $\lambda\in\Lambda$ and $1\leq i\leq \sN(\lambda)$.
\end{defn}

We end by noting that from a dimension theoretical perspective, the subcritical and critical cases are not interesting. In those cases, almost surely, only a finite number of branches will survive. Since the associated maps are strict contractions, the resulting random set will be discrete and all dimensions coincide trivially. 
The supercritical case ensures that there exists positive probability that there is an exponentially increasing number of surviving branches; we will focus on this case and stipulate the following condition.
\begin{cond}
	Let $(\mathbb{L},\mu)$ be a RIFS. We say that $(\mathbb{L},\mu)$ is \emph{non-extinguishing} if 
	\[
	\E(\sN(\lambda))=\int_\Lambda \sN(\lambda)\;d\mu>1.
	\]
	In particular, this means there exists $\epsilon>0$ such that $\mu(\{\lambda\in\Lambda \mid \sN(\Lambda)>1\})\geq\epsilon$ and that $\mu(\{\lambda\in\Lambda\mid \sN(\lambda)\geq 1\})=1-\mu(\{0\})$.
\end{cond}
\subsection{Main Result}

To give meaningful dimension results some assumptions have to be made on the amount of overlap of the images.
\begin{defn}
	Let $(\mathbb{L},\mu)$ be a RIFS. If there exists an open set $\mathcal{O}\subset\R^d$ satisfying
	\begin{enumerate}
		\item for all $\lambda\in\Lambda\setminus\{0\}$ and $1\leq i\leq \sN(\lambda)$ we have $f_\lambda^i (\mathcal{O})\subseteq\mathcal{O}$, and
		\item for all $\lambda\in\Lambda\setminus\{0\}$ and $1\leq i\leq \sN(\lambda)$, if $f_\lambda^i (\mathcal{O})\cap f_\lambda^j(\mathcal{O})\neq\varnothing$ then $i = j$,
	\end{enumerate}
	we say that the RIFS satisfies the \emph{uniform open set condition (UOSC)}.
\end{defn}

\begin{theo}[Main Theorem]\label{thm:mainTheorem}
	Let $(\mathbb{L},\mu)$ be a non-extinguishing RIFS that satisfies the UOSC and Condition~\ref{cond:mappingconditions}.
	Then,  for $\Prob$-almost every $\tau\in\mathcal{T}$,
	\[
	\dim_{qA}F_\tau=\dim_H F_\tau.
	\]
\end{theo}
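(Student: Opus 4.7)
By~(\ref{eq:basicIneq}) it suffices to prove $\dim_{qA} F_\tau \leq s$ almost surely, where $s := \dim_H F_\tau$ is the deterministic root of $\E(\sum_j (c_\lambda^j)^s) = 1$. The plan is to fix $\epsilon > 0$ and produce a $\delta = \delta(\epsilon) > 0$ with $N_{r,R}(F_\tau) \leq C(R/r)^{s+\epsilon}$ whenever $r \leq R^{1+\delta} \leq |F_\tau|$. To handle the possibly uncountable family of contractions simultaneously, fix $\beta \in (c_{\max},1)$ and pass to a stopping tree $\hat{\mathbf{T}}_\tau$ whose $n$-th generation $\hat{\mathbf{T}}_\tau^n$ consists of those $v \in \mathbf{T}_\tau$ whose contraction product $c_v$ along its branch first enters the window $[\beta c_{\min},\,1)\cdot\beta^n$. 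Because at most $K_0 := \lceil \log\beta/\log c_{\min}\rceil$ steps in $\mathbf{T}_\tau$ are needed to traverse one generation, the offspring numbers in $\hat{\mathbf{T}}_\tau$ are conditionally independent and uniformly bounded by $\sN^{K_0}$, and the dimension equation translates into $\E\sum_{v \in \hat{\mathbf{T}}_\tau^1} c_v^s = 1$. Hence $Z_n := \#\hat{\mathbf{T}}_\tau^n$ is a supercritical bounded-offspring branching process with mean $m \asymp \beta^{-s}$.

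The normalised martingale $W_n := Z_n/m^n$ has all polynomial moments (bounded offspring), so Markov's inequality yields
\[
\Prob\!\left(Z_n \geq \beta^{-n(s+\epsilon)}\right) \leq C_p \beta^{np\epsilon} \qquad (p \geq 1).
\]
The subtree below any $v \in \hat{\mathbf{T}}_\tau^m$ is an independent copy of $\hat{\mathbf{T}}_\tau$ (its starting scale lies in the same window, so it inherits the same bound), so the same estimate controls its generation-$k$ descendant count. Summing over at most $\beta^{-m(s+\epsilon)}$ candidates $v$ (the complementary event having already been absorbed by Borel--Cantelli) and choosing $p$ so that $p\delta\epsilon > s + \epsilon + 1$, a further Borel--Cantelli argument shows: almost surely, for every sufficiently large $m$ and every $k \geq \delta m$, each generation-$m$ subtree has at most $\beta^{-k(s+\epsilon)}$ descendants at generation $m+k$.

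Given $x \in F_\tau$ and $r \leq R^{1+\delta} \leq |F_\tau|$, choose $m,k$ with $R \asymp \beta^m$ and $r \asymp \beta^{m+k}$, so $k \geq \delta m - O(1)$. The UOSC together with a volume comparison on the invariant open set $\mathcal{O}$ bounds by a constant $C_1 = C_1(d,\beta,c_{\min},\mathcal{O})$ the number of generation-$m$ cylinders $f_v(\Delta)$ meeting $B(x,R)$. Summing the descendant bound above over these $C_1$ subtrees yields $N_{r,R}(F_\tau) \leq C(R/r)^{s+\epsilon}$, whence $h(\delta, F_\tau) \leq s+\epsilon$; sending $\epsilon \to 0$ (with $\delta$ shrinking accordingly) completes the proof.

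The main obstacle is the concentration step: the union bound must absorb up to $\beta^{-m(s+\epsilon)}$ candidate subtrees while only a relative depth $k \geq \delta m$ is available for the tail to decay. The uniform boundedness of offspring, specifically provided by the discretization into $\hat{\mathbf{T}}_\tau$, is what supplies arbitrarily high polynomial moments of $W$ and allows $p$ to be taken large enough for any fixed $\delta > 0$; without the stopping-tree construction the offspring could be unbounded (through long chains with small contractions), and the required moments might fail.
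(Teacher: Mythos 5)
Your argument follows the same architecture as the paper's proof: discretise scales into a stopping tree so that offspring numbers become bounded (the paper's $\epsilon$-codings $\Xi_{\eta^k}(\tau)$ play exactly the role of your $\hat{\mathbf{T}}_\tau^n$ with $\eta=\beta$), identify the mean with $\beta^{-s+o(1)}$, prove a tail bound for the normalised population $W_n=Z_n/m^n$, and run Borel--Cantelli over starting vertices using the depth gap $k\geq\delta m$ supplied by the quasi-Assouad definition, with the UOSC volume lemma (Lemma~\ref{upperasslem1}) controlling how many cylinders meet $B(x,R)$. The one genuinely different ingredient is the concentration step. The paper invokes Athreya's theorem (Lemma~\ref{lma:athreyabound}) to obtain exponential moments of $W_k$ and hence a superexponential tail $D e^{-tm^{\epsilon k}}$, which is strong enough to beat the crude count $\sN^{l}$ of \emph{all possible} starting vertices at level $l$. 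You instead use only polynomial moments of $W$ of order $p=p(\delta,\epsilon)$, compensating with the sharper observation that on a high-probability event there are only $\beta^{-m(s+\epsilon)}$ \emph{actual} candidates at generation $m$. Either refinement alone suffices to close the union bound, so your route is correct and somewhat more elementary (no exponential-moment theorem needed), at the price of letting $p$ blow up as $\delta\to 0$; and your remark that one only needs $h(\delta)\leq s+\epsilon$ along a sequence $\delta_n\to 0$ is legitimate by monotonicity of $h$ in $\delta$.

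Two points are glossed over that the paper treats explicitly and that you should not omit. First, $Z_n=\#\hat{\mathbf{T}}_\tau^n$ is not literally a Galton--Watson process: the offspring law of a stopping vertex $v$ depends on where $c_v$ falls inside its window $[\beta^{n}c_{\min},\beta^{n})$, so the offspring counts are independent but not identically distributed. This is precisely what the paper's inequalities (\ref{eq:approximationBounding})--(\ref{eq:approximationBoundingMod}) are for: one sandwiches $Z_n$ between $\sN^{\mp n}$ times a genuine i.i.d.\ branching process, and the resulting exponential error is absorbed by choosing $\beta$ small enough that $\log\sN/\log(1/\beta)<\epsilon/4$. Second, for your Markov bound to produce decay you need $m\leq\beta^{-(s+\epsilon/2)}$, not merely $m\asymp\beta^{-s}$: an unspecified multiplicative constant in $m$ becomes $C^{n}$ in $m^{n}$ and could swallow the margin $\beta^{-n\epsilon}$. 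This too is repaired by taking $\beta$ small, via the paper's computation that $s_\eta\to s$. Neither issue is fatal, but each needs a sentence before the proof is complete.
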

We immediately obtain the following corollary.
\begin{cor}
	Let $F_\tau$ be as above. The dimension spectrum is constant and coincides with the box-counting dimension, \ie for all $\theta\in (0,1)$ and almost every $\tau\in\mathcal{T}$,
	\[
	\dim_B F_\tau = \dim_A^\theta F_\tau.
	\]
\end{cor}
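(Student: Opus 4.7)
The corollary is essentially a sandwich argument with no further stochastic input beyond what is stated. First, for every totally bounded metric space $F$ and every $\theta\in(0,1)$, the chain (\ref{eq:basicIneq}) gives
\[
\overline\dim_B F \leq \dim_A^\theta F \leq \dim_{qA} F.
\]
Applied to $F=F_\tau$, this bounds $\dim_A^\theta F_\tau$ above by $\dim_{qA} F_\tau$ and below by $\overline\dim_B F_\tau$ for \emph{every} realisation $\tau\in\mathcal{T}$, so no probabilistic work is needed for the two side inequalities.

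Second, collapse the endpoints using two already-known almost-sure identities. Theorem~\ref{thm:mainTheorem} asserts $\dim_{qA} F_\tau = \dim_H F_\tau$ for $\Prob$-almost every $\tau$, while the earlier result of Troscheit referenced in the introduction gives $\dim_H F_\tau = \overline\dim_B F_\tau$ almost surely for random recursive attractors under the UOSC; in particular $\dim_B F_\tau$ is well-defined and equals $\dim_H F_\tau$. Intersecting the two full-measure events (neither of which depends on $\theta$) yields
\[
\dim_B F_\tau = \overline\dim_B F_\tau = \dim_A^\theta F_\tau = \dim_{qA} F_\tau = \dim_H F_\tau
\]
for every $\theta\in(0,1)$ simultaneously on that full-measure set, which is exactly the claim.

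The one bookkeeping step worth flagging is the exchange of the quantifiers ``for every $\theta$'' and ``for almost every $\tau$'': this is legitimate precisely because the null set produced by the Main Theorem and by Troscheit's theorem does not depend on $\theta$, so a single almost-sure event handles all $\theta$ at once rather than requiring a delicate union over an uncountable family. No other obstacle arises, since all the substantive work has already been absorbed into Theorem~\ref{thm:mainTheorem}.
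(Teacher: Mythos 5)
Your proof is correct and follows essentially the same route as the paper: sandwich $\dim_A^\theta F_\tau$ between the box-counting and quasi-Assouad dimensions via (\ref{eq:basicIneq}) and collapse the chain using Theorem~\ref{thm:mainTheorem}. Your additional remarks on invoking the almost-sure coincidence of Hausdorff and box dimensions and on the $\theta$-independence of the null set are sensible bookkeeping but do not constitute a different argument.
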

\begin{proof}
	The Assouad spectrum is bounded below by the Hausdorff dimension and above by the quasi-Assouad dimension, see (\ref{eq:basicIneq}). Since $\dim_{qA}F_\tau=\dim_H F_\tau$ with full probability, the Assouad spectrum must also coincide with these values.
\end{proof}

\section{Proof of Main Theorem}
Before we prove the main theorem we prove some constructive lemmas and define a stopping set where the contraction rates become roughly comparable. Recall that $\mathcal{O}$ is the open set guaranteed by the uniform open set condition, with topological closure $\overline{\mathcal{O}}$, and that $\lvert.\rvert$ is the diameter of a set.
\begin{defn}
	Let $\epsilon>0$. The set of all codings that have associated contraction of rate comparable to $\epsilon$ and do not go extinct are denoted by
	\[
	\Xi_{\epsilon}(\tau)=\{e\in\mathbf{T}_\tau^k \mid k\in\N \text{ and } \lvert f_{e_1}\circ\dots\circ f_{e_k}(\overline{\mathcal{O}})\rvert<\epsilon\leq\lvert f_{e_1}\circ\dots\circ f_{e_{k-1}}(\overline{\mathcal{O}})\rvert \}\cap \Sigma,
	\]
	where
	\[
	\Sigma=\left\{e\in\bigcup_{k\in\N}\mathbf{T}_\tau^k \;\Big|\;  \text{for all } m>k \text{ there exists } \hat{e}\in \mathbf{T}_\tau^m \text{ such that }e_i=\hat e_i \text{ for }1\leq i \leq k \text{ and }\hat e_i\neq\emptyset\right\}.
	\]
	We refer to $\Xi_\epsilon (\tau)$ as the \emph{$\epsilon$-codings} and $\Sigma$ as the \emph{non-extinguishing codings}.
	
\end{defn}

We can limit the number of overlapping cylinders using the uniform open set condition.
\begin{lma} \label{upperasslem1}
	Assume that $(\mathbb{L},\mu)$ satisfies the UOSC. 
	Then
	\[
	\#\{e\in\Xi_{\epsilon}(\tau)\mid \overline{f_e(\mathcal{O})}\cap B(z,r)\neq\varnothing\} \, \leq \,  (4/c_{\min})^{d}
	\]
	for all $z\in F_{\tau}$ and  $\epsilon \in (0,1]$, where $\mathcal{O}$ is the open set guaranteed by the UOSC.
\end{lma}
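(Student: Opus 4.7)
The plan is to combine the UOSC-induced disjointness of the cylinders with a volume-packing estimate in $\mathbb{R}^d$, which together bound how many similar copies of $\mathcal{O}$ of comparable size can crowd into a single ball.

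First, I would verify that the family $\{f_e(\mathcal{O})\}_{e \in \Xi_\epsilon(\tau)}$ is pairwise disjoint. The set $\Xi_\epsilon(\tau)$ is an antichain in $\mathbf{T}_\tau$ by its stopping-time-like definition, so any two distinct $e, e' \in \Xi_\epsilon(\tau)$ first diverge at some level $j$, where they choose distinct maps from the IFS attached to their common ancestor $v$. UOSC gives that the two images at level $j$ are already disjoint in $\mathcal{O}$; since all further maps keep images inside $\mathcal{O}$ (by the first UOSC axiom) and the preceding common similarities are injective, this disjointness propagates to $f_e(\mathcal{O}) \cap f_{e'}(\mathcal{O}) = \varnothing$.

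Second, the definition of $\Xi_\epsilon(\tau)$ forces each $f_e(\overline{\mathcal{O}})$ to have diameter in $[c_{\min}\epsilon, \epsilon)$: less than $\epsilon$ by definition, and at least $c_{\min}\epsilon$ because its parent has diameter at least $\epsilon$ and the final contraction ratio is at least $c_{\min}$. Consequently, any $f_e(\overline{\mathcal{O}})$ meeting $B(z,r)$ sits inside $B(z, r+\epsilon)$ by the triangle inequality, and after fixing a suitable inscribed ball in $\mathcal{O}$ it also contains a ball of radius at least $c_{\min}\epsilon/2$.

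Finally, packing $N$ such disjoint inscribed balls of radius $c_{\min}\epsilon/2$ into $B(z, r+\epsilon)$ gives $N \cdot (c_{\min}\epsilon/2)^d \leq (r+\epsilon)^d$, which in the scale regime $r \leq \epsilon$ (the only regime in which a bound depending solely on $c_{\min}$ and $d$ can hold) collapses to $N \leq (4/c_{\min})^d$. The main subtlety is constant-tracking in this last step: pinning down the factor $(4/c_{\min})^d$ requires a favourable normalization of $\mathcal{O}$, so that one can inscribe a ball of radius equal to half the diameter (as holds when $\mathcal{O}$ is itself taken to be a ball); otherwise the implicit constant inflates by a factor that depends only on the shape of the open set chosen in the UOSC, and the argument is otherwise routine.
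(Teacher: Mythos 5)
Your proposal is correct and follows essentially the same route as the paper: UOSC-induced pairwise disjointness of the cylinders $f_e(\mathcal{O})$, the diameter bounds $c_{\min}\epsilon \leq \lvert f_e(\overline{\mathcal{O}})\rvert < \epsilon$ coming from the stopping definition of $\Xi_\epsilon(\tau)$, containment of all intersecting cylinders in a ball of radius comparable to $\epsilon$, and a Lebesgue-volume comparison to count them. The paper's version sums $\lvert f_e(\mathcal{O})\rvert^d$ against $\mathcal{L}^d(B(z,2\epsilon))$ rather than packing inscribed balls, and is in fact slightly less careful than you are about the shape-dependent constant of $\mathcal{O}$ and about the implicit restriction $r\leq\epsilon$, both of which you rightly flag as affecting only the explicit constant.
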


We note that this proof is based heavily on \cite{Hutchinson81} and \cite{Olsen11}, and first appeared in a simplified version in~\cite{Fraser14c}. This version, allowing for RIFS with infinitely many IFSs, was first given in the author's PhD thesis~\cite[Lemma 5.1.5]{TroscheitPhDThesis} and we reproduce it here.

\begin{proof}
	Fix $z\in F_{\tau}$ and $\epsilon>0$.  Let ${\Xi}=\{e\in\Xi_{\epsilon}(\tau)\mid \overline{f_e(\mathcal{O})}\cap B(z,\epsilon)\neq\varnothing\}$ and suppose the ambient space is $\R^{d}$.\index{stopping} We have
	\[
	\#\Xi(\epsilon c_{\min})^{d} \ = \ \sum_{e\in\Xi}(\epsilon c_{\min})^{d} \ \leq \ \sum_{e\in\Xi}\lvert\overline{f_e(\mathcal{O})}\rvert^{d}.
	\]
	But since $\overline{f_e(\mathcal{O})}\cap B(z,\epsilon)\neq\varnothing$ and $|\overline{f_e(\mathcal{O})}|<r$ we find $f_e(\mathcal{O})\subseteq B(z,2\epsilon)$ for all $e \in \Xi$ and since the sets $f_e(\mathcal{O})$ are pairwise disjoint by the UOSC we have
	\[
	\#\Xi(\epsilon c_{\min})^{d} \ \leq \ \sum_{e\in\Xi}|f_e(\mathcal{O})|^{d} \ \leq \  \mathcal{L}^{d}(B(z,2\epsilon)) \ \leq \ (4\epsilon)^{d},
	\]
	where $\mathcal{L}^{d}$ is the $d$-dimensional Lebesgue measure.  It follows that $\#\Xi\leq(4/c_{\min})^{d}$.
\end{proof}

Clearly the projections of these $\epsilon$-codings are a cover of $F_\tau$, \[\bigcup_{e\in\Xi_{\epsilon}(\tau)}f_e(\overline{\mathcal{O}})\supseteq F_\tau,\]and so $N_\epsilon(F_\tau) \leq \#\Xi_{\epsilon}(\tau)$. Now every $\epsilon$-coding has at least one descendent and by Lemma~\ref{upperasslem1} there cannot be more than $(4/c_{\min})^d$ many $\epsilon$-codings such that their images intersect any ball of radius $\epsilon$. Therefore there exists $C>0$ independent of $\tau$ and $\epsilon$ such that 
\begin{equation}\label{eq:comparable}
C\#\Xi_{\epsilon}(\tau)\leq N_\epsilon(F_\tau) \leq \#\Xi_{\epsilon}(\tau).
\end{equation}

Further, 
\begin{equation}\label{eq:approximationBounding}
\sN^{-1}\sum_{e\in \Xi_{\epsilon}(\tau)}\#\Xi_{\delta}(\sigma^{e}\tau)\leq \#\Xi_{\epsilon\delta}(\tau)\leq \sN\sum_{e\in \Xi_{\epsilon}(\tau)}\#\Xi_{\delta}(\sigma^{e}\tau).
\end{equation}
Briefly, this is because joining appropriate $\delta$-codings to $\epsilon$-codings might not be $\epsilon\delta$-codings but their numbers may at most differ by a multiple of the maximal number of descendants, $\sN$.
Inductively, we obtain
\begin{equation}\label{eq:approximationBoundingMod}
\#\Xi_{\epsilon^k}(\tau)\leq \sN^k\sum_{e_1\in \Xi_{\epsilon}(\tau)}\;\;\sum_{e_2\in \Xi_{\epsilon}(\sigma^{e_1}\tau)} \dots \sum_{e_k\in \Xi_{\epsilon}(\sigma^{e_{k-1}}\tau)}\#\Xi_{\delta}(\sigma^{e_k}\tau)
\end{equation}
from (\ref{eq:approximationBounding}) with an analogous lower bound.

By definition the summands in (\ref{eq:approximationBounding}) and (\ref{eq:approximationBoundingMod}) are i.i.d.\ random variables and, fixing $\epsilon>0$, we write $X^\epsilon= X^\epsilon(\tau) = \#\Xi_{\epsilon}(\tau)$ for a generic copy of this random variable. We observe that $X^\epsilon$ is a random variable taking values from $0$ to some $M\in \N$ with fixed probability and we consider the Galton--Watson process with that offspring distribution.
It is a basic result (see, \eg~\cite{AthreyaBook}) that the Galton--Watson process
\[
X^\epsilon_k = \sum_{j=1}^{X^\epsilon_{k-1}} X^\epsilon
\]
satisfies $\log X_k^\epsilon / k \to \E(X^\epsilon)$ as $k\to\infty$ almost surely.
In particular there exists a unique $s_\epsilon$ such that $\E(X)=\E(\#\Xi_{\epsilon})=\epsilon^{-s_\epsilon}$. We note that
\begin{align*}
s_B&=\lim_{\epsilon\to0}  \frac{\log N_{\epsilon}F(\tau)}{\log(1/\epsilon)}=
\lim_{\epsilon\to0} \lim_{k\to\infty} \frac{\log N_{\epsilon^k}(F_\tau)}{\log(1/\epsilon^k)}=
\lim_{\epsilon\to0} \lim_{k\to\infty} \frac{\log \#\Xi_{\epsilon^k}(\tau)}{\log(1/\epsilon^k)}\\
&\leq\lim_{\epsilon\to 0}\lim_{k\to\infty} \frac{\log(\sN^k X_k)}{k\log(1/\epsilon)}
=\lim_{\epsilon\to 0}\left[ \frac{\log \sN}{\log(1/\epsilon)}+\frac{\E X}{\log(1/\epsilon)}\right]=\lim_{\epsilon\to0}s_\epsilon.
\end{align*}
The opposite inequality can be achieved by using $\sN^{-1}$ in the lower bound and we conclude $s_B=\lim_{\epsilon\to0}s_\epsilon$.
This means that these approximations give us a behaviour as close to the limit behaviour as one desires and we will now analyse the associated Galton--Watson process. 

\begin{lma}[Athreya~{\cite[Theorem 4]{Athreya94}}]\label{lma:athreyabound}
	Let $X_k^\epsilon$ be a Galton--Watson process with mean $m=\E(X^\epsilon)<\infty$.
	Suppose\footnote{We note that in \cite{Athreya94} the expectation is conditioned on the first generation being $1$. In our case we always assume a rooted tree making this conditioning superfluous. Further, their paper mostly considers the case where the probability of having no descendent is zero. However, this condition is not used in this particular statement.}
	that $\E(\exp(\theta_0 X^\epsilon))<\infty$ for some $\theta_0>0$. Then there exists $\theta_1>0$ such that
	\[
	\sup_k \E(\exp(\theta_1 W_k^\epsilon))<\infty,
	\]
	where $W_k^\epsilon=X_k^\epsilon/m^k$ is the associated normalised Galton--Watson process.
\end{lma}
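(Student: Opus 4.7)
The plan is to derive a functional recursion for the moment generating function $\phi_k(\theta) = \E(\exp(\theta W_k^\epsilon))$ and produce a uniform-in-$k$ supersolution on a small interval of $\theta$. Let $f(s) = \E(s^{X^\epsilon})$ denote the probability generating function of the offspring variable, so that $f(1) = 1$ and $f'(1) = m$. Conditioning on the first generation and exploiting the fact that the subtrees rooted at each of the $X_1^\epsilon$ children are iid copies of the entire process yields the composition identity
\[
\phi_k(\theta) = f\bigl(\phi_{k-1}(\theta/m)\bigr), \qquad \phi_0(\theta) = e^{\theta}.
\]
The assumption $\E(\exp(\theta_0 X^\epsilon)) < \infty$ extends $f$ analytically to a complex neighbourhood of $s = 1$, giving the uniform Taylor expansion $f(1 + u) = 1 + m u + \tfrac{1}{2} f''(1) u^2 + O(u^3)$ for $|u|$ sufficiently small.

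I would then seek a supersolution of the form $\Psi(\theta) = 1 + \theta + K\theta^2$ on some $[0, \theta_1]$, \ie a function satisfying both $f(\Psi(\theta/m)) \leq \Psi(\theta)$ and $e^{\theta} \leq \Psi(\theta)$ on this interval. Substituting $\Psi(\theta/m) = 1 + \theta/m + K\theta^2/m^2$ into the Taylor expansion of $f$, the linear-in-$\theta$ term matches automatically, and the quadratic term reduces to the scalar inequality
\[
\frac{K}{m} + \frac{f''(1)}{2 m^2} \,\leq\, K,
\]
which can be satisfied by any $K \geq f''(1)/(2 m(m-1))$ once $m > 1$. After fixing such a $K$ (taking it at least $\tfrac{1}{2}$ so that $e^\theta \leq \Psi(\theta)$ holds too), the cubic remainder is absorbed into the strictly positive margin $(1 - 1/m) K\theta^2$ provided $\theta_1$ is chosen small. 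A straightforward induction on $k$ then transfers the pointwise bound $\phi_{k-1} \leq \Psi$ on $[0, \theta_1]$ to the same bound for $\phi_k$, since the recursion only evaluates $\phi_{k-1}$ at arguments in $[0, \theta_1/m] \subset [0, \theta_1]$.

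The main obstacle is handling the cubic and higher-order remainder uniformly in $k$; this is exactly where supercriticality enters, as the contraction margin $(1 - m^{-1}) K\theta^2$ must dominate these remainders across all iterations simultaneously. Once the supersolution is in place, we obtain $\sup_k \phi_k(\theta_1) \leq \Psi(\theta_1) < \infty$, which is the conclusion of the lemma. I note that this argument nowhere invokes $\Prob(X^\epsilon = 0) = 0$, confirming the author's footnoted remark that Athreya's non-extinction convention is dispensable in this setting.
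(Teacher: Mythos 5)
The paper does not actually prove this lemma --- it is imported verbatim from Athreya's 1994 paper --- so there is no internal argument to measure yours against; what you have written is a self-contained reconstruction, and it is essentially correct (and close in spirit to Athreya's own proof). The recursion $\phi_k(\theta)=f(\phi_{k-1}(\theta/m))$ with $\phi_0(\theta)=e^{\theta}$ is the right branching identity; the hypothesis $\E(\exp(\theta_0 X^\epsilon))<\infty$ does make $f$ analytic on a disc of radius $e^{\theta_0}>1$, giving a uniform cubic remainder near $s=1$; and the induction transferring $\phi_{k-1}\le\Psi$ to $\phi_k\le\Psi$ is legitimate because $f$ is nondecreasing on $[0,e^{\theta_0})$ and $\theta/m\in[0,\theta_1]$ whenever $\theta\in[0,\theta_1]$ and $m>1$. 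Two points deserve tightening. First, the quadratic-coefficient inequality must be strict: if $K$ equals exactly $f''(1)/(2m(m-1))$, the order-$\theta^2$ margin vanishes and cannot absorb the cubic remainder; moreover the margin is $\bigl[K(1-1/m)-f''(1)/(2m^2)\bigr]\theta^2$, not $(1-1/m)K\theta^2$ as you write, so you should take $K$ strictly above the threshold (and $K\ge 1$ also disposes of $e^{\theta}\le\Psi(\theta)$ on $[0,1]$). Second, your argument uses $m>1$ essentially, and rightly so: the conclusion is false for $m\le 1$ (on survival $W_k\ge m^{-k}$, which forces $\E(e^{\theta W_k})\to\infty$ for subcritical processes). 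Supercriticality is a standing assumption in Athreya's paper and holds here because $\E(\#\Xi_\epsilon)=\epsilon^{-s_\epsilon}>1$ for small $\epsilon$, but since the lemma as quoted only assumes $m<\infty$, you should state $m>1$ as a hypothesis you are using. Your closing observation that $\Prob(X^\epsilon=0)>0$ causes no difficulty for this argument is correct and confirms the paper's footnote.
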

We can use this important result to prove this immediate lemma.
\begin{lma}\label{lma:superexponentialdecay}
	Let $X_k^\epsilon$ be a Galton--Watson process with mean $m=\E (X^\epsilon)<\infty$ such that $C_1=\sup_{\tau}X^\epsilon(\tau)<\infty$. Let $C>0$ be some constant. There exist $t>0$ and $D>0$ such that
	\[
	\Prob\left\{X_k^\epsilon\geq C m^{(1+\epsilon)k}\right\}\leq D e^{- t m^{\epsilon k}},
	\]
	\ie the probability that $X_k^\epsilon$ exceeds $C m^{(1+\epsilon)k}$ decreases superexponentially in $k$.
\end{lma}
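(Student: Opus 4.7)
The plan is to obtain the bound as a Chernoff-style tail estimate for the normalised process $W_k^\epsilon = X_k^\epsilon/m^k$, using Lemma~\ref{lma:athreyabound} to supply a uniform exponential moment bound. The only input that needs checking is the hypothesis of Lemma~\ref{lma:athreyabound}, namely that $\E(\exp(\theta_0 X^\epsilon)) < \infty$ for some $\theta_0 > 0$. This is immediate: since the offspring distribution is bounded by $C_1$, we have $\E(\exp(\theta_0 X^\epsilon)) \leq \exp(\theta_0 C_1) < \infty$ for \emph{every} $\theta_0 > 0$. Applying Lemma~\ref{lma:athreyabound} then gives some $\theta_1 > 0$ and a finite constant $D := \sup_k \E(\exp(\theta_1 W_k^\epsilon))$.

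Next, I would translate the event $\{X_k^\epsilon \geq C m^{(1+\epsilon)k}\}$ into an event on $W_k^\epsilon$ by dividing through by $m^k$, obtaining the equivalent event $\{W_k^\epsilon \geq C m^{\epsilon k}\}$. A Markov inequality applied to the non-negative random variable $\exp(\theta_1 W_k^\epsilon)$ then yields
\[
\Prob\bigl\{W_k^\epsilon \geq C m^{\epsilon k}\bigr\}
\;=\; \Prob\bigl\{\exp(\theta_1 W_k^\epsilon) \geq \exp(\theta_1 C m^{\epsilon k})\bigr\}
\;\leq\; D\,\exp\bigl(-\theta_1 C m^{\epsilon k}\bigr).
\]
Setting $t := \theta_1 C > 0$ delivers the claimed superexponential decay.

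There is no real obstacle here; the statement is essentially a packaged Chernoff bound, and all the probabilistic content is hidden inside the cited result of Athreya. The two points worth being careful about are (i) verifying that boundedness of the offspring (guaranteed by $C_1 < \infty$, which in turn ultimately comes from the uniform bound $(4/c_{\min})^d$ on $\#\Xi_\epsilon$ via Lemma~\ref{upperasslem1}) legitimately supplies the exponential moment hypothesis, and (ii) keeping track of the normalisation $W_k^\epsilon = X_k^\epsilon/m^k$ so that the decay is driven by $m^{\epsilon k}$ rather than by $k$ alone, which is what produces the superexponential rate.
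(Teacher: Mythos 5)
Your proposal is correct and follows exactly the paper's argument: verify the exponential moment hypothesis of Lemma~\ref{lma:athreyabound} from the bound $C_1<\infty$, pass to the normalised process $W_k^\epsilon=X_k^\epsilon/m^k$, and apply a Chernoff/Markov bound to $\exp(\theta_1 W_k^\epsilon)$ with $t=C\theta_1$. There is nothing to add.
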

\begin{proof}
	
	Let $W_k^\epsilon=X_k / m^k$. Then $\E(W_k)=1$ for all $k$. 
	Since $C_1<\infty$, we conclude that $\E(\exp(X^\epsilon))\leq \exp{C_1}<\infty$ and the conditions of Lemma~\ref{lma:athreyabound} are satisfied for $\theta_0=1$. Therefore exists $\theta_1>0$ and $D>0$ such that $\E(\exp(\theta_1 W_k))\leq D$ for all $k$.
	We use the Chernoff bound to obtain
	\begin{align*}
	\Prob\left\{X_k\geq C m^{(1+\epsilon)k}\right\}&=\Prob\left\{ W_k \geq C m^{\epsilon k}\right\}\\
	&= \Prob\left\{ \exp(\theta_1 W_k) \geq \exp(C\theta_1 m^{\epsilon k})\right\}\\
	&\leq \frac{\E\exp(\theta_1 W_k)}{\exp(C\theta_1 m^{\epsilon k})}.
	\end{align*}
	 So, using Lemma~\ref{lma:athreyabound},
	\[
	\Prob\left\{X_k\geq C m^{(1+\epsilon)k}\right\}\leq D e^{- t m^{\epsilon k}},
	\]
	for $t=C\theta_1>0$ as required.
\end{proof}

Given a starting vertex, we can thus bound the probability that the process will eventually exceed $m^{(1+\epsilon)k}$.

\begin{cor}\label{cor:superexponentialsum}
	Let $X_k^\epsilon$ be a Galton--Watson process with mean $m=X^\epsilon<\infty$ such that $\sup_{\tau}X^\epsilon(\tau)<\infty$. Assume $C>0$ is some constant, then 
	\[
	\Prob\{X_k^\epsilon>C m^{(1+\epsilon)k} \text{ for some }k\geq l\}\leq \sum_{k=l}^\infty D e^{- t m^{\epsilon k}}
	\leq D' e^{-t m^{\epsilon l}}
	\]
	for some $D'>0$ independent of $l$.
\end{cor}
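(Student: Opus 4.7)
The plan is to combine Boole's inequality with a geometric-series estimate; all the real work has been done in Lemma~\ref{lma:superexponentialdecay}, and the corollary amounts essentially to packaging its bound uniformly in the index $k$. Concretely, I would first apply the union bound to the countable family of events $\{X_k^\epsilon > C m^{(1+\epsilon)k}\}_{k\geq l}$ and then invoke the lemma term-by-term; since the constants $D$ and $t$ from the lemma do not depend on $k$, this yields the first stated inequality immediately.

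For the second inequality, I would factor out the leading exponential,
\[
\sum_{k=l}^\infty D e^{-t m^{\epsilon k}} \;=\; D e^{-t m^{\epsilon l}} \sum_{k=l}^\infty \exp\bigl(-t(m^{\epsilon k} - m^{\epsilon l})\bigr),
\]
so that it suffices to bound the remaining sum by a constant independent of $l$. Bernoulli's inequality gives $m^{\epsilon(k-l)} \geq 1 + (k-l)(m^\epsilon - 1)$, and together with $m^{\epsilon l} \geq 1$ this implies $m^{\epsilon k} - m^{\epsilon l} \geq (k-l)(m^\epsilon - 1)$. The remaining sum is therefore dominated by a convergent geometric series of ratio $e^{-t(m^\epsilon - 1)}$, and one can take $D' = D/(1 - e^{-t(m^\epsilon - 1)})$.

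The only mild subtlety --- hardly a real obstacle --- is that the geometric ratio must lie strictly below $1$, which requires $m > 1$. This is inherited from the non-extinguishing hypothesis via the identification $m = \E(X^\epsilon) = \epsilon^{-s_\epsilon}$ recorded just before the lemma, valid for $\epsilon$ small enough that $s_\epsilon > 0$; this is the only place where supercriticality of the Galton--Watson process is actually used. Everything else is purely mechanical, and the doubly-exponential decay in Lemma~\ref{lma:superexponentialdecay} leaves a comfortable margin.
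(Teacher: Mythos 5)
Your argument is correct and is precisely the one the paper leaves implicit: a union bound over $k\geq l$ applied to Lemma~\ref{lma:superexponentialdecay}, followed by dominating the tail sum by a geometric series with ratio $e^{-t(m^\epsilon-1)}<1$, which requires $m>1$ as you rightly note is supplied by supercriticality. Nothing to add.
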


We are now ready to prove Theorem~\ref{thm:mainTheorem}.
\begin{proof}[Proof of Main Theorem]
	Fix $\epsilon>0$ and let $\eta>0$ be small enough such that $-\log \sN / \log \eta<\epsilon / 4$. Consider the probability that the Assouad dimension exceeds the almost sure box-counting dimension $s=\dim_B F_\tau$.
	\begin{align}
	\Prob\left\{\dim_{qA}F_\tau \geq s+\epsilon \right\} &\leq 
	\Prob\Big\{\forall\delta>0,\;\exists (x_i,r_i,R_i)_{i\in\N}\in(F_\tau\times\R^+\times\R^+)^{\N}\text{ such that }r_i\leq R_i^{1+\delta},\nonumber\\
	&\hspace{1.0cm}R_i\to 0\text{ as }i\to\infty\text{ and }N_{r_i}(B(x_i,R_i)\cap F_\tau)\geq \left(\frac{R_i}{r_i}\right)^{s+\epsilon/2}\Big\}\label{eq:probabilityToBound}.
	\end{align}
	By Lemma~\ref{upperasslem1} the number of words comparable to $R_i$ that have non-trivial intersection with $B(x_i,R_i)\cap F_\tau$ is bounded above by $(4/c_{\min})^d$ and so we can get a new upper bound to (\ref{eq:probabilityToBound}).
	\begin{align}
	&\leq \Prob\Big\{ \forall \delta>0, \; \exists (r_i,k_i)_i\in(\R^+\times\N)^{\N}, \; \exists v\in\mathbf{T}^{k_i}(\tau)\text{ such that }r_i\leq \lvert f_v (F_{\sigma^v \tau})\rvert^{1+\delta},\;k_{i+1}>k_i\nonumber\\
	&\hspace{6cm}\text{ and }N_{r_i}(f_v (F_{\sigma^v \tau}))\geq (c_{\min}/4)^d \left(\frac{\lvert f_v (F_{\sigma^v \tau})\rvert}{r_i}\right)^{s+\epsilon/2}\Big\}\nonumber
	\end{align}
	Now $N_{r_i}(f_v (F_{\sigma^v \tau}))\leq \sN\cdot N_{r_i/d_v}(F_{\sigma^v \tau})\leq \sN \cdot \#\Xi_{r_i/d_v}(\sigma^v \tau)$ by (\ref{eq:comparable}), where $d_v=\lvert f_v (F_{\sigma^v \tau})\rvert$. 
	Expressing this in terms of the Galton--Watson process for approximation $\eta$, we obtain 
	\[N_{r_i}(f_v (F_{\sigma^v \tau}))\leq \sN^{k_v+1} X^\eta_{k_v}(\sigma^v\tau),\] where ${k_v}$ is such that $\eta^{{k_v}}\leq r_i/d_v<\eta^{k_v-1}$. Therefore
	\begin{align}
	\Prob\left\{\dim_{qA}F_\tau \geq s+\epsilon \right\} &\leq 
	\Prob\Big\{ \forall \delta>0 \text{ there exists }(l_i,k_i)_{i\in\N}\in(\N\times\N)^{\N},\;\exists v\in\mathbf{T}^{l_i}(\tau)\nonumber\\
	&\hspace{0.5cm}\text{ such that }\eta^{k_i}\leq d_v^\delta \text{ and }X^\eta_{k_i}(\sigma^v\tau)\geq \frac{c_{\min}^d}{4^d \sN^{k_i+1}}\left(\eta^{-k_i}\right)^{s+\epsilon/2}\Big\}\label{eq:borel-cantellipre}.
	\end{align}
	Now, fix $\delta>0$. We now estimate $\Prob\{ X^\eta_{k}(\tau)\geq C{\sN^{-k}}\eta^{-k(s+\epsilon/2)} \}$, where $C>0$ is a uniform constant. First, by the choice of $\eta$ we have $C{\sN^{-k}}\eta^{-k(s+\epsilon/2)}\leq C \eta^{-k(s+\epsilon/4)}$ and so 
	\[
	\Prob\{ X^\eta_{k}(\tau)\geq C{\sN^{-k}}\eta^{-k(s+\epsilon/2)}\}\leq \Prob\{ X^\eta_{k}(\tau)\geq C\eta^{-k(s+\epsilon/4)}\}.
	\]
	Now consider $W_k^\eta=X_k^\eta/m_\eta^k$ for $m_\eta=\E(X^\eta)$. Clearly $m_\eta\leq \eta^{-s}=m$, and so 
	$\E(X^\eta_{k})\leq \eta^{-sk}$. We can now apply Lemma~\ref{lma:superexponentialdecay} and Corollary~\ref{cor:superexponentialsum} to obtain
	\[
	\Prob\{ X^\eta_{k}(\tau)\geq C{\sN^{-k}}\eta^{-k(s+\epsilon/2)}\}\leq D e^{-tm^{\epsilon k/4}}
	\]
	and
	\[
	\Prob\{ X^\eta_{k}(\tau)\geq C{\sN^{-k}}\eta^{-k(s+\epsilon/2)}\text{ for some }k\geq l\}\leq D' e^{-tm^{\epsilon l/4}}.
	\]
	Now consider again the event in equation (\ref{eq:borel-cantellipre}). First note that $d_v\leq c_{\min}^{l_i}$ and thus the event with $\eta^{k_i}\leq d_v^\delta$ replaced by $\eta^{k_i}\leq c_{\min}^{\delta l_i}$ has greater probability. The last condition is equivalent to $k_i\geq \delta l_i \log c_{\min}/\log\eta=\delta' l_i$, for some $\delta'>0$ depending on $\eta$ and $\delta$.
	So,
	\begin{align}
	\Prob\left\{\dim_{qA}F_\tau \geq s+\epsilon \right\} &\leq\sup_{\delta>0}\Prob\{\text{ there exist infinitely many }l\in\N,\;\exists v\in\mathbf{T}^{l}(\tau),\nonumber\\
	&\hspace{3.5cm}\exists k\geq \delta' l\text{ such that }
	 X^\eta_{k}(\tau)\geq C{\sN^{-k}}\eta^{-k(s+\epsilon/2)}\}.\label{eq:lasboundingThing}
	\end{align}
	But for fixed $\delta$ and $\l$,
	\begin{equation}
	\Prob\{\;\exists v\in\mathbf{T}^{l}(\tau),\;\exists k\geq \delta' l\text{ such that }
	X^\eta_{k}(\tau)\geq C{\sN^{-k}}\eta^{-k(s+\epsilon/2)}\}
	\leq\sN^l D' \exp\left(-tm^{\epsilon \delta'l/4}\right).\nonumber
	\end{equation}
	But since
	\begin{multline}
	\sum_{l\in\N}\Prob\{\;\exists v\in\mathbf{T}^{l}(\tau),\;\exists k\geq \delta' l\text{ such that }
	X^\eta_{k}(\tau)\geq C{\sN^{-k}}\eta^{-k(s+\epsilon/2)}\}\\
	\leq\sum_{l=1}^\infty \sN^l D' \exp\left(-tm^{\epsilon \delta'l/4}\right)
	=\sum_{l=1}^\infty D' \exp\left(l\log\sN-tm^{\epsilon \delta'l/4}\right)<\infty, \nonumber
	\end{multline}
	we conclude that the right hand side of (\ref{eq:lasboundingThing}) is zero for every $\delta>0$ by the Borel--Cantelli Lemma. Hence $\Prob\left\{\dim_{qA}F_\tau \geq s+\epsilon \right\}=0$ and so, by arbitrariness of $\epsilon$, the claim is proven.
\end{proof}

\section{Other random models and overlaps}
\subsection{The random homogeneous and $V$-variable case}
Random recursive sets are not the only natural way of defining random sets with varying iterated function systems.
Another important model is the random homogeneous model, also referred to as the $1$-variable model. This is in reference to the more general $V$-variable model which we will also discuss in this section.

The random homogeneous model can --informally-- be described as applying the same IFS at every stage of the construction. In keeping with our flexible notation, the random iterated function system $(\mathbb{L},\mu)$ has an associated random homogeneous model that is also defined as the projection of randomly chosen $\tau\in\mathcal{T}$ albeit with a different measure $\Prob_1$. This measure is supported on the subset $\mathcal{T}_1=\{\tau\in\mathcal{T} \mid \tau(v)=\tau(w) \text{ whenever } d(v)=d(w)\}$, where $d(v)$ is the tree depth at which $v$ occurs.
Let $L\subset\N$ be a finite set and associate an open set $O(l)\subseteq\Lambda$ to each of these integers. Write $1_l=(1,\dots,1)$ for the node consisting of $l$ many $1$s. The measure $\Prob_1$ is defined on all $\widetilde{L}=\{\tau\in\mathcal{T}_1 \mid \tau(1_l)\in O(l)\text{ for all }l\in L\}$ by
\[
\Prob_1(\widetilde{L})=\prod_{l\in L} \mu(O(l)).
\]
We refer the reader to \cite{Hambly92} and \cite{Troscheit15} for more information on these sets.

The class of $V$-variable attractors were first introduced by Bransley et al.~in \cite{Barnsley05,Barnsley08,Barnsley12} with the aim to model more complicated natural processes. It is characterised by allowing up to $V$ different behaviours at every level of the construction and they can be similarly defined with the notation of code-trees. First we define the subset of $\mathcal{T}$ of interest.
Let 
\[
\mathcal{T}_V=\Big\{\tau\in\mathcal{T} \;\Big| \;\;\sup_l \{\sigma^v \tau \mid v\in\mathfrak{T}\text{ and }d(v)=l\}\leq V\;\Big\},
\]
be the subset of $\mathcal{T}$ such that at every tree depth there are at most $V$ different subtrees. The measure $\Prob_V$ is defined analogously to above, and we end their definition by noting that random homogeneous sets are indeed $V$-variable sets for $V=1$.

In light of our random recursive results one would hope that $V$-variable sets also have the coinciding Hausdorff and quasi-Assouad dimension. This indeed holds, following a strategy close to the one employed in our main theorem. The crucial difference is estimating the number of maximal descendants possible. While the associated process is no longer a Galton--Watson process, it can be modelled by multiplication of positive \emph{i.i.d}.\ variables.
A standard Chernoff bound can be established and the same conclusion achieved. We refer the reader to \cite{Troscheit15,TroscheitPhDThesis} for an explanation of how $V$-variable sets can be modelled by multiplication of \emph{i.i.d}.\ variables and briefly prove the random homogeneous case for illustrative purposes.

\begin{theo}
	Let $(\mathbb{L},\mu)$ be a non-extinguishing RIFS that satisfies the UOSC and Condition~\ref{cond:mappingconditions}. Additionally, assume that $\mu(\{0\})=0$. Then, for $\Prob_1$-almost every $\tau\in\mathcal{T}$,
	\[
	\dim_{qA}F_\tau=\dim_H F_\tau.
	\]
\end{theo}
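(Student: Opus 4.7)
The plan is to follow the skeleton of the proof of Theorem \ref{thm:mainTheorem}, exploiting the homogeneity to simplify the combinatorics and replacing the Galton--Watson Chernoff bound (Lemma \ref{lma:superexponentialdecay}) with an analogue for the multiplicative i.i.d.\ process associated with the random homogeneous construction. The setup through (\ref{eq:comparable}) is preserved, since Lemma \ref{upperasslem1} and the comparability $C\#\Xi_\epsilon(\tau) \leq N_\epsilon(F_\tau) \leq \#\Xi_\epsilon(\tau)$ depend only on UOSC and Condition \ref{cond:mappingconditions}.

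The key structural observation is that for any two vertices $v_1, v_2 \in \mathfrak{T}$ with $|v_1| = |v_2| = l$, the subtrees $\sigma^{v_1}\tau$ and $\sigma^{v_2}\tau$ coincide, being determined by $(\lambda_{l+1}, \lambda_{l+2}, \ldots)$ alone. Consequently, in the reduction analogous to (\ref{eq:probabilityToBound})--(\ref{eq:borel-cantellipre}), the quantifier ``$\exists v \in \mathbf{T}^{l_i}(\tau)$'' collapses to a single condition on a common subtree $\tau_l$ at depth $l$, and $\tau_l \stackrel{d}{=} \tau$ under $\Prob_1$ for each fixed $l$ by i.i.d.\ sampling of $(\lambda_j)$. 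This removes the $\sN^l$-factor that appears in the final Borel--Cantelli sum of the main theorem's argument.

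Since $\mu(\{0\}) = 0$, each $\sN(\lambda_j) \in \{1,\ldots,\sN\}$ and the tree never goes extinct. The correct growth exponent of $\#\Xi_\epsilon$ is the almost sure box-counting dimension $s_B$, characterised for random homogeneous sets by the equation $\E[\log \sum_i (c_\lambda^i)^{s_B}] = 0$. In place of Lemma \ref{lma:superexponentialdecay} we apply Cram\'er's large deviation theorem to the i.i.d.\ sequence $(\log Y_j)_{j\in\N}$ with $Y_j = \sum_{i=1}^{\sN(\lambda_j)}(c_{\lambda_j}^i)^{s_B}$; the mean-zero property $\E[\log Y_j] = 0$ gives
\[
\Prob_1\biggl\{\prod_{j=1}^{K} Y_j \geq e^{K\gamma}\biggr\} \leq e^{-K I(\gamma)}
\]
for each $\gamma > 0$, with rate $I(\gamma) > 0$. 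This, combined with the potential inequality $\#\Xi_\epsilon(\tau)(\epsilon c_{\min})^{s_B} \leq \sum_{v\in\Xi_\epsilon(\tau)} c_v^{s_B}$ and a careful comparison of $\sum_{v\in\Xi_\epsilon}c_v^{s_B}$ with the multiplicative martingale $\prod_{j=1}^K Y_j$ at a truncation depth $K\asymp\log(1/\epsilon)$, produces the tail estimate $\Prob_1\{\#\Xi_\epsilon(\tau) \geq \epsilon^{-s_B-\gamma}\} \leq D e^{-c \log(1/\epsilon)}$.

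Assembling these pieces, the Borel--Cantelli sum $\sum_l \sum_{k \geq \delta' l} e^{-c k\log(1/\eta)}$ converges precisely because the $\sN^l$-factor has been shed, so that $\Prob_1\{\dim_{qA}F_\tau \geq s_B + \epsilon\} = 0$ for every $\epsilon > 0$; the matching lower bound is immediate from (\ref{eq:basicIneq}). The main obstacle is the Chernoff step: the crude bound $\#\Xi_\epsilon(\tau) \leq \prod_{j=1}^{K_{\max}}\sN(\lambda_j)$ has exponent $\E[\log\sN]/(-\log c_{\max})$, which strictly exceeds $s_B$ whenever contractions are non-uniform. Recovering the sharp exponent forces one to route through the potential $\sum c_v^{s_B}$ and the zero-drift product $\prod_j Y_j$---the random homogeneous counterpart of the Mauldin--Williams equation $\E[\sum_i (c_\lambda^i)^{s_B}]=1$ that underpins the Galton--Watson analysis in the random recursive case.
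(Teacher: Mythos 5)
Your proposal is correct and follows essentially the same strategy as the paper's (admittedly brief) proof: homogeneity collapses the union over the $\sN^l$ cylinders at level $l$ to a single event on the common shifted label sequence, the covering count is controlled by a product of i.i.d.\ positive random variables, a Chernoff/Cram\'er bound gives decay that is only exponential in the level (which, unlike the recursive case, now suffices precisely because the $\sN^l$ union-bound factor is gone), and Borel--Cantelli finishes. The one place you diverge is the choice of multiplicative process: the paper takes the i.i.d.\ factors to be the block counts $\#\Xi_{\epsilon}(\tau_i)$ themselves, one factor per $\epsilon$-generation, so that $N_{\epsilon^k}(F_\tau)$ is comparable to $\prod_{i=1}^{k}\#\Xi_{\epsilon}(\tau_i)$ up to $\sN^{\pm k}$, and applies the large-deviation bound to the logarithm of that product directly; you instead take one factor per tree level, $Y_j=\sum_i (c_{\lambda_j}^i)^{s_B}$, and must then pass from the zero-drift product $\prod_j Y_j$ back to $\#\Xi_{\epsilon}$ via the potential inequality. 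Your version makes the identification of the correct exponent (the Hambly-type equation $\E[\log\sum_i(c_\lambda^i)^{s_B}]=0$) explicit and avoids having to argue that $\log m_\epsilon/\log(1/\epsilon)\to s_B$ as $\epsilon\to0$, at the price of the extra comparison step in which the level-$K$ potentials are summed over the $O(\log(1/\epsilon))$ levels that the section $\Xi_{\epsilon}$ can cut through (a harmless polynomial prefactor, but one you should state rather than leave inside ``a careful comparison''); the paper's version stays closer to the Galton--Watson bookkeeping of Theorem~\ref{thm:mainTheorem}. Both routes work, and your diagnosis of why the crude branch count $\prod_j\sN(\lambda_j)$ gives the wrong exponent when the contractions are non-uniform is accurate and is exactly the reason some form of potential or block-count normalisation is needed.
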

\begin{proof}
	Similarly to the random recursive case one can show that there exists $C>0$, such that
	\[
	C \sN^{-k} \prod_{i=1}^k \#\Xi_{\epsilon}(\tau_i)\leq N_{\epsilon}(F_\tau)\leq \sN^{k} \prod_{i=1}^k \#\Xi_{\epsilon}(\tau_i),
	\]
	where the $\tau_i$ are chosen independently. Thus we want to find a bound on the probability that this product exceeds its geometric average. Write $Y^\epsilon=\#\Xi_{\epsilon}$ for the generic random variable and $Y_k^\epsilon=\prod_{i=1}^k Y^\epsilon$. Then $\log Y_k^\epsilon\simeq k\log m_\epsilon$ for some $m_\epsilon$ and we find
	\[
	\Prob_1\{\log Y_k \geq (1+\delta)k\log m_\epsilon\}\leq D \exp(-t k)
	\]
	for some $D,t>0$ depending only on $\delta$. The remaining argument now uses the homogeneity to conclude that the probability that one cylinder at level $k$ exceeds the average is equal to the probability that all cylinders exceed the average using the homogeneity. A standard Borel-Cantelli argument then allows the conclusion as in Theorem \ref{thm:mainTheorem}.
\end{proof}

\subsection{Overlaps}

We conjecture that it would be possible to remove the UOSC condition entirely and we will briefly outline why we feel this should be the case.
First we alter our definition of $\epsilon$-codings to take into account the overlaps.
\begin{defn}
	Let $\epsilon>0$. The set of all codings that have associated contraction of rate comparable to $\epsilon$ and do not go extinct are denoted by
	\[
	\Xi_{\epsilon}(\tau)=\{e\in\mathbf{T}_\tau^k \mid k\in\N \text{ and } \lvert f_{e_1}\circ\dots\circ f_{e_k}(\overline{\mathcal{O}})\rvert<\epsilon\leq\lvert f_{e_1}\circ\dots\circ f_{e_{k-1}}(\overline{\mathcal{O}})\rvert \}\cap \Sigma,
	\]
	where
	\[
	\Sigma=\left\{e\in\bigcup_{k\in\N}\mathbf{T}_\tau^k \;\Big|\;  \text{for all } m>k \text{ there exists } \hat{e}\in \mathbf{T}_\tau^m \text{ such that }e_i=\hat e_i \text{ for }1\leq i \leq k\right\}.
	\]
	We refer to $\Xi_\epsilon (\tau)$ as the \emph{$\epsilon$-codings} and $\Sigma$ as the \emph{non-extinguishing codings}.
	We write $S_\epsilon(\tau)$ for the set of all subsets of $\Xi_{\epsilon}(\tau)$, \ie $S_\epsilon(\tau)=\mathcal{P}(\Xi_{\epsilon}(\tau))$ and write
	\[
	S'_\epsilon(\tau)=\{S\in S_\epsilon(\tau) \mid f_v(\overline{\mathcal{O}})\cap f_w(\overline{\mathcal{O}})=\varnothing\text{ for all distinct }v,w\in S\}
	\]
	for those collection of words such that their associated images under $f$ are disjoint.
	Finally we write $\Xi'_{\epsilon}(\tau)\in S'_\epsilon(\tau)$ for the element with maximal cardinality, choosing arbitrarily if it is not unique.
\end{defn}

This altered definition no longer requires Lemma~\ref{upperasslem1} to prove that $N_\epsilon(F_\tau)$ and $\#\Xi'_{\epsilon}(\tau)$ are comparable but allows for a simple geometric argument, see~\cite[Proof of Theorem 3.2.17]{TroscheitPhDThesis}. However, only the lower bound to equations (\ref{eq:approximationBounding}) and (\ref{eq:approximationBoundingMod}) will hold.
This no longer suffices to prove the main result, however, the only difficulty arises in path that were extinct in the $\epsilon$-approximation, but became active later. Our hope was to modify the Galton--Watson process slightly by uniformly including a finite $m$ extra (and normally redundant) paths. That is, we define $X^\eta(\tau)=\#\Xi'_{\eta}(\tau)+m$. For small enough $\eta$, the normalised growth of $X_{k}^\eta$ is less than $s+\epsilon/4$ and we still obtain the same bound from Lemma~\ref{lma:superexponentialdecay} and Corollary~\ref{cor:superexponentialsum}.
It is our hope that these additional redundant paths take up the r\^ole of the codings that `revive' at some point in the process and this would prove the main theorem without our overlap constraints.

\section{Appendix: Examples of random recursive sets}\label{sect:examples}
We now give several examples to illustrate the flexibility of the construction and the tree-codings being used. The first example is a random version of the middle-third Cantor set, where one randomly discards the middle third interval. The second is a Cantor set where three continuous parameters are chosen randomly at every stage. We end by showing that Mandelbrot percolation is a random recursive set for appropriately chosen maps and relate this to general fractal percolation.
\subsubsection*{Example 1}
In its simplest form $\Lambda$ is discrete and $\mu$ is the finite weighted sum of Dirac measures. Let $\Lambda=\{1,2\}$ and $\mu=p\delta_1+(1-p)\delta_2$ for $0<p<1$, where $\delta_x$ is the Dirac measure with unit mass on $x\in\R^k$. Set $\mathbb{I}_1=\{x/2, \; x/2+1/2\}$ and $\mathbb{I}_2=\{x/3,\;x/3+2/3\}$. The first IFS gives rise to the unit interval, whereas the second `generates' the Cantor middle-third set. Since both IFSs have two maps, we set $\sN=2$ and consider the full binary tree.
For any node $v$ in the binary tree, the value $\tau(v)$ is $1$ or $2$ with probability $p$ and $1-p$, respectively, independent of any other node. The resulting set is a subset of the unit line, where at each stage of the construction we either
\begin{itemize}
	\item divide the remaining line segments into halves with probability $p$, or
	\item divide the remaining line segments into thirds and discard the middle interval with probability $1-p$.
\end{itemize} 

We can easily find the almost sure Hausdorff and box packing dimension of the attractor $F_\tau$. It is given by the unique $s$ such that $p(2/2^s)+(1-p)(2/3^s)=1$. It is elementary to show that $\log2/\log3<s<1$ for $0<p<1$. However, its Assouad dimension is almost surely the maximum of the two individual attractors, \ie the Assouad dimension is $1$ almost surely. Our main result show then that the quasi-Assouad dimension and Assouad spectrum equal to the Hausdorff dimension $s$.
\subsubsection*{Example 2}
The construction is flexible enough to allow for more complicated constructions. 
Let $\Lambda=[1/4,1/3]\times[1/4,1/3]\times[1/2,1]\subset\R^3$ and $\mu$ be the normalised Lebesgue measure on $\Lambda$. For $\lambda=(a,b,c)\in\Lambda$ we set $\mathbb{I}_\lambda=\{a x, \; bx+c\}$. This is an infinite family of IFSs that each generate a Cantor set. The translations were chosen such that the UOSC holds with the open unit interval as the open set.
The almost sure Hausdorff dimension is given by the unique $s$ satisfying 
\[1=\E(a^s+b^s)=\frac{8\cdot  3^{-s} - 3 \cdot 2^{1 - 2 s}}{1 + s},\]
and we compute $s\approx 0.56187\dots$.
The Assouad dimension does not fall under the scope of Fraser et al.~\cite{Fraser14c} and Troscheit~\cite{Troscheit15}, however their methods can easily be adapted to show that the Assouad dimension is almost surely the maximal achievable value, $\log3/\log2$.

\subsubsection*{Example 3}
We now show that limit sets of Mandelbrot percolation are random recursive sets.
Recall that $k$-fold Mandelbrot percolation of the $d$-dimensional unit cube for threshold value $0<p<1$ is defined recursively in the following way:
Let $Q_1$ be the set containing the unit cube. The set $Q'_{k+1}$ is defined as the set of all cubes that are obtained by splitting all cubes in $Q_k$ into $k^d$ smaller cubes of the same dimensions to obtain $(k^d \cdot \# Q_k)$ subcubes with sidelengths $1/k$. For each cube in $Q'_{k+1}$  we then decide independently with probability $p$ to keep the cube. We set $Q_{k+1}$ to be the set of `surviving' cubes.
This process is called Mandelbrot percolation and the random limit set one obtains is $Q_{\infty}=\bigcap_{k=1}^\infty Q_k$. For $p> 1/k^d$, there exists positive probability that the limit set is non-empty. Conditioned on non-extinction, we have $\dim_{qA}Q_{\infty}=\dim_HQ_{\infty}=\log(k^d p)/\log k$ and $\dim_A Q_{\infty}=d$, since $Q_\infty$ is a random recursive set, a fact we now show. 

For simplicity we assume $k=d=2$, \ie we percolate the unit square and subdivide any subsquare into $2\times 2$ squares, keeping each with probability $1/4<p<1$. Let $f_1$ be the homothety that maps the unit square to $[0,1/2]\times[0,1/2]$, let $f_2$ be the homothety that maps $[0,1]\times[0,1]$ to $[0,1/2]\times[1/2,1]$ and similarly let $f_3$ map $[0,1]\times[0,1]$ to $[1/2,1]\times[1/2,1]$ and $f_4$ map $[0,1]\times[0,1]$ to $[1/2,1]\times[0,1/2]$.
We define $\Lambda=\{0,1,\dots,15\}$, $\mathbb{L}=\{\mathbb{I}_0,\mathbb{I}_1,\dots, \mathbb{I}_{15}\}$, and $\mu=\sum_{i=0}^{15} q_i \delta_i$, with $\mathbb{I}_i$ and $q_i$ given in the table below.
\vspace{-0.5cm}
\begin{center}
	\begin{tabular}{ccc}
	\begin{tabular}[t]{|c|c|c|}\hline
		$i$&$q_i$&$\mathbb{I}_i$\\\hline\hline
		$0$ & $(1-p)^4$ & $\varnothing$\\\hline
		$1$ & $p(1-p)^3$ & $\{f_1\}$\\\hline
		$2$ & $p(1-p)^3$ & $\{f_2\}$\\\hline
		$3$ & $p(1-p)^3$ & $\{f_3\}$\\\hline
		$4$ & $p(1-p)^3$ & $\{f_4\}$\\\hline
		$5$ & $p^2(1-p)^2$ & $\{f_1,f_2\}$\\\hline
	\end{tabular}
	\begin{tabular}[t]{|c|c|c|}\hline
		$i$&$q_i$&$\mathbb{I}_i$\\\hline\hline
		$6$ & $p^2(1-p)^2$ & $\{f_1,f_3\}$\\\hline
		$7$ & $p^2(1-p)^2$ & $\{f_1,f_4\}$\\\hline
		$8$ & $p^2(1-p)^2$ & $\{f_2,f_3\}$\\\hline
		$9$ & $p^2(1-p)^2$ & $\{f_2,f_4\}$\\\hline
		$10$ & $p^2(1-p)^2$ & $\{f_3,f_4\}$\\\hline
		$11$ & $p^3(1-p)$ & $\{f_1,f_2,f_3\}$\\\hline
	\end{tabular}
	\begin{tabular}[t]{|c|c|c|}\hline
		$i$&$q_i$&$\mathbb{I}_i$\\\hline\hline
		$12$ & $p^3(1-p)$ & $\{f_1,f_2,f_4\}$\\\hline
		$13$ & $p^3(1-p)$ & $\{f_1,f_3,f_4\}$\\\hline
		$14$ & $p^3(1-p)$ & $\{f_2,f_3,f_4\}$\\\hline
		$15$ & $p^4$ & $\{f_1,f_2,f_3,f_4\}$\\\hline
	\end{tabular}
\end{tabular}
\end{center}
\vspace{0.3cm}

In a similar way fractal percolation in the sense of Falconer and Jin, see~\cite{Falconer15}, is a random recursive construction. Let $\mathbb{I}'$ be an IFS and consider its deterministic attractor $F$. Fix $p>(\#\mathbb{I})^{-1}$. The limit set of fractal percolation is obtained by percolation of the tree associated with $F$, keeping subbranches with probability $p$ and deleting them with probability $(1-p)$.

\subsection*{Acknowledgements}
The author thanks Kathryn Hare and Franklin Mendivil for proposing this question. The author also wishes to thank Jonathan Fraser for pointing out that the main theorem immediately implies the trivial Assouad spectrum. Lastly, the author thanks Hui He for bringing \cite{Athreya94} to our attention. 

\bibliographystyle{../../Biblio/stcustom}
\bibliography{../../Biblio/Biblio}

\end{document}